\renewcommand{\epsilon}{\varepsilon}
\renewcommand{\phi}{\varphi}
\title{Lagrangian fibrations on hyperk\"ahler fourfolds}
\author{Daniel Greb}
\address{Daniel Greb\\Institut f\"ur Mathematik\\Abteilung f\"ur Reine Mathematik\\Albert-Ludwigs-Universit\"at Freiburg\\Eckerstr. 1\\79104 Freiburg im Breisgau\\Germany}
\email{daniel.greb@math.uni-freiburg.de}
\author{Christian Lehn}
\address{Christian Lehn\\Institut de Recherche Math\'ematique  
Avanc\'ee\\ Universit\'e de Strasbourg\\
7 rue Ren\'e Descartes\\67084 Strasbourg Cedex\\France}
\email{lehn@math.unistra.fr}
\author{S\"onke Rollenske}
\address{S\"onke Rollenske\\Fakult\"at f\"ur Mathematik\\Universt\"at Bielefeld\\Universit\"atsstr. 25\\33615 Bielefeld\\Germany}
\email{rollenske@math.uni-bielefeld.de}
\begin{document}

\begin{abstract}
Answering the strong form of a question posed by Beauville, we give a short geometric proof that any hyperk\"ahler fourfold containing a Lagrangian subtorus $L$ admits a holomorphic Lagrangian fibration with fibre $L$.
\end{abstract}

\subjclass[2010]{53C26, 14D06, 14E30, 32G10, 32G05.}
\keywords{hyperk\"ahler manifold, Lagrangian fibration}
\maketitle
\section{Introduction}
Let $X$ be a hyperk\"ahler manifold, that is, a compact, simply-connected K\"ahler manifold  $X$  such that $H^0(X, \Omega^2_X)$ is spanned by a holomorphic symplectic form  $\sigma$. By work of Matsushita it is well-known that the only possible non-trivial holomorphic maps from $X$ to a lower-dimensional complex space are Lagrangian fibrations, see section \ref{sec: prelim}. Moreover, a special version of the  so-called Hyperk\"ahler SYZ-conjecture asserts that any hyperk\"ahler manifold can be deformed to a hyperk\"ahler manifold admitting a Lagrangian fibration.

Hence, it is an important problem to find geometric conditions on a given hyperk\"ahler manifold that guarantee the existence of a Lagrangian fibration; here we address a question  posed by Beauville \cite[Sect.~1.6]{beauville10}:
\begin{custom}[Question B]
Let $X$ be a hyperk\"ahler manifold and $L$ a Lagrangian torus in $X$. Is $L$ a fibre of a (meromorphic) Lagrangian fibration $f\colon X\to B$?
\end{custom}
In our previous article \cite{glr11a} it is shown that Question B has a positive answer in case $X$ is non-projective. Moreover, for any hyperk\"ahler manifold that admits an almost holomorphic Lagrangian fibration, a further hyperk\"ahler manfold, birational to the first one, is found, on which the Lagrangian fibration becomes holomorphic.

The approach to the projective case of Beauville's question pursued here is based on a detailed study of the deformation theory of $L$ in $X$. For this, consider the component $\gothB$ of the Barlet space that contains $[L]$ together with its universal family and the evaluation map to $X$:
\begin{equation*}
\begin{gathered}
\xymatrix{ \gothU \ar[r]^\epsilon\ar[d]_\pi & X\\  \gothB.}
\end{gathered}
\end{equation*}
It was shown in \cite[Lemma 3.1]{glr11a} that $\epsilon$ is surjective and generically finite, and that $X$ admits an almost holomorphic Lagrangian fibration if and only if $\deg(\epsilon) = 1$.

If the degree of $\epsilon$ is strictly bigger than one, some deformations of $L$ intersect $L$ in unexpected ways. In order to deal with this, we introduce the notion of \emph{$L$-reduction}: for each projective hyperk\"ahler manifold containing a Lagrangian torus there exists a projective variety $\gothT$ and a rational map $\phi_L\colon X\dasharrow \gothT$, uniquely defined up to birational equivalence, whose fibre through a general point $x$ coincides with the connected component of the intersection of all deformations of $L$ through $x$.
In this situation, we say that $X$ is \emph{$L$-separable} if $\phi_L$ is birational, and prove the following result:
\begin{custom}[Theorem \ref{thm:Lsep}]
 Let $X$ be a projective  hyperk\"ahler manifold and $L\subset X$ a
Lagrangian subtorus.
Then $X$ admits an almost holomorphic fibration with strong fibre $L$ if and only if $X$ is not $L$-separable .
\end{custom}

If $X$ is a hyperk\"ahler fourfold, then we can exclude the case that $X$ is $L$-separable by symplectic linear algebra. Moreover, based upon the rather explicit knowledge about the birational geometry of hyperk\"ahler fourfolds we obtain a positive answer to the strongest form of Beauville's question:
\begin{custom}[Theorem \ref{thm: dim4 holom}]
 Let $X$ be a four-dimensional hyperk\"ahler manifold containing a Lagrangian torus $L$. Then $X$ admits a holomorphic Lagrangian fibration with fibre $L$.
\end{custom}
At the Moscow conference ''Geometric structures on complex manifolds'' Ekaterina Amerik brought to our attention that she had independently shown a related result, based on an observation from \cite{ame-cam08}, to the effect that in dimension four every projective hyperk\"ahler manifold containing a Lagrangian subtorus $L$ admits an almost holmorphic Lagrangian fibration with fibre $L$~\cite{amerik11}.\footnote{After this article was written, Jun-Muk Hwang and Richard Weiss have posted a proof of the projective case of the \emph{weak form} of Beauville's question, producing an \emph{almost holomorphic} Lagrangian fibration on any projective $2n$-dimensional hyperk\"ahler manifold containing a Lagrangian torus, see \cite{HwangWeiss}. Their argument has two parts: one is geometric and one is concerned with abstract group theory.
In contrast, our answer to the \emph{strong form} of Beauville's question, Theorem~\ref{thm: dim4 holom}, is purely geometric, uses global arguments in addtion to local deformation theoretic ones, and uses symplectic linear algebra in place of their group-theoretic arguments. }

\subsection*{Acknowledgements}
The authors want to thank Daniel Huybrechts for his interest in our work and for several stimulating discussions. We are grateful to Ekaterina Amerik for communicating to us the observation contained in Lemma~\ref{lem:symplectic la}, which greatly simplified our previous argument. The second author thanks Laurent Manivel for stimulating discussions, in particular, for pointing out Remark \ref{remark lagrangian grassmanian}. The third author thanks Misha Verbitsky for an invitation to Moscow.

The support of the DFG
through the SFB/TR 45, Forschergruppe 790, and the third author's Emmy-Noether
project was invaluable for the success of the collaboration. The first author
gratefully acknowledges the support of the
Baden-W\"urttemberg-Stiftung via the ``Eliteprogramm f\"ur
Postdoktorandinnen und Postdoktoranden''. The second author acknowledges the support by the CNRS and the Institut Fourier.

\section{Preliminaries and setup of notation}\label{sec: prelim}

\subsection{Lagrangian fibrations}

\begin{defin}
Let $X$ be a hyperk\"ahler manifold. A \emph{Lagrangian fibration} on $X$ is a holomorphic map $f\colon X \to B$ with connected fibres onto a normal complex space $B$ such that every irreducible component of the reduction of every fibre of $f$ is a Lagrangian subvariety of $X$.
\end{defin}
Due to fundamental results of Matsushita it is known that any fibration on a hyperk\"ahler manifold is automatically Lagrangian:
\begin{theo}[\cite{matsushita99, matsushita00,matsushita01, matsushita03}]\label{thm: matsushita}
Let $X$ be a hyperk\"ahler manifold of dimension $2n$. If $f\colon  X \to B$ is a morphism with connected fibres to a normal complex space $B$ with $0 < \dim B < \dim X$, then $f$ is a Lagrangian fibration. In particular, $f$ is equidimensional and $\dim B = n$. Furthermore, every smooth fibre of $f$ is a complex torus.
\end{theo}

\subsection{Meromorphic maps}
Let $X$ be a normal complex space, $Y$ a compact complex space, and $f\colon X \dasharrow Y$ a meromorphic map. Let
\begin{equation}\label{eq:meromoresolution}
\begin{gathered}
\xymatrix{ &\widetilde X\ar[dl]_p \ar[dr]^{\widetilde f}\\ X\ar@{-->}[rr]^f && Y}
\end{gathered}
\end{equation}
be a resolution of the indeterminacies of $f$. The \emph{fibre} $F_y$ of $f$ over a point $y\in Y$ is defined to be $F_y:= p (\widetilde f^{-1}(y))$. This is independent of the chosen resolution.

Recall that a meromorphic map $f\colon X \dasharrow Y$ as above is called \emph{almost holomorphic} if there is a Zariski-open subset $U\subset Y$ such that the restriction $f\restr{\inverse f (U)}\colon\inverse f (U) \to U$ is holomorphic and proper. A \emph{strong fibre} of an almost holomorphic map $f$ is a fibre of $f\restr{\inverse f (U)}$.

Let $X$ be a normal algebraic variety, $B$ a complete algebraic variety, and $f\colon X \dasharrow B$ an almost holomorphic rational map. If $A$ is a divisor on $B$, then its \emph{pullback} via $f$ is defined either geometrically as the closure of the pullback on the locus where $f$ is holomorphic, or on the level of locally free sheaves as $f^*\ko_B(A):=(p_*\widetilde f^* \ko_B(A))^{\vee\vee}$,  where $p\colon\widetilde X\to X$ is a resolution of indeterminacies as in diagram~\eqref{eq:meromoresolution}.

\subsection{Deformations of Lagrangian subtori}\label{sect:deform}
The starting point for our approach to Beau\-ville's question is the deformation theory of a Lagrangian subtorus  $L$ in a hyperk\"ahler manifold $X$. We quickly recall the relevant results from \cite[Sects.~2 and 3]{glr11a}.

The Barlet space $\gothB(X)$ of $X$ (or Chow scheme in the projective setting) parametrises compact cycles in $X$ and it turns out (see \refenum{i} of Lemma~\ref{deflem} below) that there is a unique
 irreducible component $\gothB$ of $\gothB(X)$ containing the point $[L]$. Denoting by $\gothU$ the graph of the universal family over $\gothB$ and by $\Delta$ the \emph{discriminant locus} of $\gothB$, i.e., the set of points parametrising singular elements in the family $\gothB$, we obtain the following diagram.
\begin{equation}\label{diagram:Barlet}
\begin{gathered}
\xymatrix{ \gothU_\Delta \ar@^{(->}[r]\ar[d]& \gothU \ar[r]^\epsilon\ar[d]_\pi & X\\ \Delta \ar@^{(->}[r]& \gothB.}
\end{gathered}
\end{equation}
A detailed analysis of the maps in diagram \eqref{diagram:Barlet} shows that a small \'etale or analytic neighbourhood of $L$ in $X$ fibres over a neighbourhood of $[L]$ in $\gothB$. More precisely, we have the following result.
\begin{lem}[{\cite[Lem.~3.1]{glr11a}}]\label{deflem} Let $X$ be a hyperk\"ahler manifold of dimension $2n$ and let $L$ be a Lagrangian subtorus of $X$. Then, the following holds.
\begin{enumerate}
 \item The Barlet space $\gothB(X)$ is smooth of dimension $n$ near $[L]$. In particular, $[L]$ is contained in a unique irreducible component $\gothB$  of $\gothB(X)$ and  $\gothU$ is smooth of dimension $2n$ near $\pi^{-1}([L])$.
\item The morphism $\epsilon$ is finite \'etale along smooth fibres of $\pi$. In particular, a sufficiently small deformation of $L$ is disjoint from $L$ and  there are no positive-dimensional families of smooth fibres through a general point $x\in X$.
\item If $[L'] \in \gothB$ with smooth $L'$, then $L'$ is a Lagrangian subtorus of $X$.
\end{enumerate}
\end{lem}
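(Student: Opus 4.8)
The plan is to control the deformation theory of $L$ entirely through its normal bundle, using the holomorphic symplectic form $\sigma$ to trivialise the relevant cohomology. For the first assertion I would start from the fact that contraction $v\mapsto\iota_v\sigma$ defines an isomorphism $TX\cong\Omega^1_X$; restricting to $L$ and invoking the Lagrangian condition $\sigma|_L=0$ identifies the subbundle $TL\subset TX|_L$ with the conormal bundle, and dualising yields the canonical isomorphism $N_{L/X}\cong\Omega^1_L$. Since $L$ is a complex torus, $\Omega^1_L\cong\ko_L^{\oplus n}$ is trivial, so $h^0(L,N_{L/X})=n$ and $h^1(L,N_{L/X})=n^2$. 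The first number is the dimension of the Zariski tangent space $T_{[L]}\gothB(X)=H^0(L,N_{L/X})$, giving $\dim_{[L]}\gothB(X)\le n$; the second is nonzero, so smoothness cannot follow from a dimension count and must be argued separately. Here I would invoke the stability theorem for Lagrangian submanifolds: every small deformation of $L$ stays Lagrangian, and the functor of Lagrangian deformations is governed by the \emph{closed} holomorphic $1$-forms on $L$. As all holomorphic $1$-forms on a torus are closed, this functor is smooth of dimension $h^0(L,\Omega^1_L)=n$. Hence $\gothB(X)$ is smooth of dimension $n$ at $[L]$, which forces a unique component $\gothB$ through $[L]$, and $\gothU$, being a smooth family of $n$-dimensional tori over the smooth $n$-dimensional base, is smooth of dimension $2n$ along $\pi^{-1}([L])$.

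For the second assertion I would compute $d\epsilon$ at a point $u$ on a smooth fibre $L'=\pi^{-1}(t)$, writing $x:=\epsilon(u)$. The relative tangent sequence reads $0\to T_uL'\to T_u\gothU\to T_t\gothB\to 0$, and $d\epsilon_u$ restricts to the identity identification $T_uL'\cong T_xL'$ on the fibre direction, while on the quotient it becomes, under the Kodaira--Spencer identification $T_t\gothB\cong H^0(L',N_{L'/X})$, the evaluation map $\mathrm{ev}_x\colon H^0(L',N_{L'/X})\to(N_{L'/X})_x$. Because $N_{L'/X}\cong\Omega^1_{L'}\cong\ko_{L'}^{\oplus n}$ is globally trivial, evaluation of global sections at any point is an isomorphism; comparing with the normal bundle sequence for $L'\subset X$ and using that the induced maps on sub- and quotient are isomorphisms shows $d\epsilon_u$ is an isomorphism. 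Thus $\epsilon$ is \'etale along smooth fibres, and since $\pi$ is proper it is finite there. \'Etaleness foliates a neighbourhood of $L$ into pairwise disjoint deformations, so a sufficiently small deformation of $L$ is disjoint from $L$; finiteness of $\epsilon$ means only finitely many smooth fibres meet a general point, excluding positive-dimensional families.

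For the third assertion, let $L'$ be a smooth member of $\gothB$. As $\gothB$ is irreducible, the locus of smooth fibres is connected, so Ehresmann's theorem makes $L'$ diffeomorphic to $L$; in particular $b_1(L')=2n$ and $\dim L'=n$. First I would show $L'$ is Lagrangian: the restrictions $[\sigma]|_{L_t}$ form a flat section of the local system $R^2\pi_*\mathbf{C}$ over the smooth locus, and this section vanishes at $[L]$, hence identically. Since $\sigma|_{L'}$ is a holomorphic, thus closed, $2$-form on the compact K\"ahler manifold $L'$ with trivial de Rham class, Hodge theory forces $\sigma|_{L'}=0$, and as $\dim L'=n$ this is exactly the Lagrangian condition. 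Finally $L'$ is a torus: being compact K\"ahler, $b_1(L')=2h^{1,0}(L')$, so $h^0(L',\Omega^1_{L'})=n=\dim L'$, and a compact K\"ahler manifold whose irregularity equals its dimension is a complex torus via its Albanese map.

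The main obstacle is the unobstructedness in the first assertion: the obstruction space $H^1(L,N_{L/X})$ is genuinely nonzero, so smoothness of $\gothB(X)$ at $[L]$ rests not on a formal count but on the special feature that the obstructions are annihilated by the Lagrangian (closed-form) constraint, which is precisely the input provided by the stability theorem for Lagrangian submanifolds.
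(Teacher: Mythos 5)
Your strategy for parts~(i) and~(ii) is sound and is in fact the one used in the reference \cite{glr11a} from which the paper quotes this lemma (the paper itself gives no proof): contraction with $\sigma$ identifies $N_{L/X}\cong\Omega^1_L\cong\mathcal{O}_L^{\oplus n}$, Voisin's stability theorem for Lagrangian submanifolds gives unobstructedness --- correctly invoked, since on a compact K\"ahler manifold every holomorphic $1$-form is closed, so the ``Lagrangian'' deformation space is the whole deformation space --- and \'etaleness of $\epsilon$ follows because evaluation of global sections of a trivial bundle at a point is an isomorphism. Two small caveats: your argument for (ii) uses, for an arbitrary smooth fibre $L'$, that $N_{L'/X}$ is trivial, which is the content of (iii); this is only an ordering problem, since (iii) does not use (ii). Also, finiteness of $\epsilon$ comes from properness of $\epsilon$ (compactness of $\gothU$, by Lieberman's theorem) together with discreteness of its fibres, not from properness of $\pi$.

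The genuine gap is the last step of (iii). The criterion you invoke --- ``a compact K\"ahler manifold whose irregularity equals its dimension is a complex torus via its Albanese map'' --- is false: the blow-up of a two-dimensional complex torus at a point is compact K\"ahler with $q=2=\dim$, yet is not a torus (its Albanese map is the blow-down). You must use the full strength of the diffeomorphism $L'\cong L$ rather than only $b_1(L')=2n$: a compact K\"ahler manifold with the cohomology \emph{ring} of a torus is a torus, because the Albanese map $a\colon L'\to\mathrm{Alb}(L')$ then induces an isomorphism on $H^1$, hence (both rings being exterior algebras on $H^1$) on the whole ring, so $a$ has degree one; and $a$ is finite, since a contracted curve would pair to zero with every class in the image of $a^*$ on $H^2$, which is all of $H^2(L',\mathbf{R})$, contradicting positivity against a K\"ahler class; a finite bimeromorphic morphism onto a smooth target is an isomorphism. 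A second, related defect: your appeals to Ehresmann's theorem and to the local system $R^2\pi_*\mathbf{C}$ over $\gothB\setminus\Delta$ presuppose that $\pi$ is a proper submersion of \emph{manifolds} there, but at that stage you only know $\gothB$ is smooth near $[L]$; smoothness at other points of $\gothB\setminus\Delta$ again requires Voisin's theorem, which requires knowing first that those fibres are Lagrangian --- circular as written. The standard repair is to prove Lagrangian-ness of \emph{all} members cohomologically, with no topological input: the cycle class $[L_t]$ is constant on the connected component $\gothB$, so $\int_{L_t}\sigma\wedge\overline{\sigma}\wedge\omega^{n-2}=\int_L\sigma\wedge\overline{\sigma}\wedge\omega^{n-2}=0$ for every $t$, and pointwise semi-positivity (up to a universal constant) of this integrand on an $n$-dimensional subvariety forces $\sigma|_{L_t}=0$. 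With that in hand, Voisin's theorem applies at every smooth member, $\pi$ is a proper submersion over $\gothB\setminus\Delta$, and your Ehresmann argument, followed by the corrected Albanese argument, completes (iii).
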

\begin{rem}\label{epsilon etale} We remark two simple but useful consequences of Lemma~\ref{deflem}.
\begin{enumerate}
\item The locus $X_\Delta:=\epsilon(\gothU_\Delta)$ is the locus of points $x\in X$ such that there is a singular deformation of $L$ passing through $x$. By dimension reasons it is a proper subset of $X$ and by Lemma \ref{deflem} \refenum{ii} the map $\epsilon$ is finite and \'etale on the preimage of ${X\setminus X_\Delta}$.
\item Statement \refenum{ii} implies in particular that for any two points  $[L],[M] \in \gothB$ the intersection product $[L].[M]$ as cycles in $X$ vanishes. It is therefore impossible for members of the family $\gothB$ to intersect in a finite number of points.
\end{enumerate}
\end{rem}
\subsection{Almost holomorphic Lagrangian fibrations and Barlet spaces.}
The following result relates the deformation theory of $L$ in $X$ discussed above to our question about globally defined almost holomorphic Lagrangian fibrations.
\begin{lem}[{\cite[Lem.~3.2]{glr11a}}]\label{lem:epsilon birational}
Let $X$ be a hyperk\"ahler manifold containing a Lagrangian subtorus $L$. Then $X$ admits an almost holomorphic Lagrangian fibration with strong fibre $L$ if and only if the evaluation map $\epsilon$ in diagram \eqref{diagram:Barlet} is bimeromorphic.
\end{lem}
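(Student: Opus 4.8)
The plan is to prove both implications by passing back and forth between the fibration $f$ and the universal family $\pi$ through the evaluation map $\epsilon$, using Lemma~\ref{deflem} to control the local geometry near smooth members.

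For the implication ``$\epsilon$ bimeromorphic $\Rightarrow$ almost holomorphic fibration'', I would simply set $f := \pi \circ \epsilon^{-1}\colon X \dasharrow \gothB$ and verify that it does the job. By Remark~\ref{epsilon etale}\,\refenum{i} the map $\epsilon$ is finite and \'etale over $X\setminus X_\Delta$; being moreover of degree one it is an isomorphism there, so $\epsilon^{-1}$, and hence $f$, is holomorphic on $X\setminus X_\Delta$. The fibres of $f$ over the locus $\gothB\setminus\Delta$ of smooth members are the tori $L_b := \epsilon(\pi^{-1}(b))$, which are Lagrangian subtori by Lemma~\ref{deflem}\,\refenum{iii} and in particular compact, while the fibre over $[L]$ is $L$ itself. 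To upgrade holomorphy on $X\setminus X_\Delta$ to almost holomorphy, I would restrict $\epsilon$ to $\pi^{-1}(\gothB\setminus\Delta)$: by Lemma~\ref{deflem}\,\refenum{ii} this restriction is \'etale, and degree one forces it to be injective over a dense Zariski-open set $V\subseteq \gothB\setminus\Delta$, hence an open immersion onto an open subset $X^\circ \subseteq X$. Since each $L_b$ with $b\in V$ lies entirely inside $X^\circ$, one has $f^{-1}(V) = X^\circ$, and $f|_{X^\circ}\colon X^\circ \to V$ is proper and holomorphic with compact fibres $L_b$. This exhibits $f$ as an almost holomorphic Lagrangian fibration with strong fibre $L = F_{[L]}$.

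For the converse, suppose $f\colon X\dasharrow B$ is an almost holomorphic Lagrangian fibration with strong fibre $L$, holomorphic and proper over a Zariski-open $U\subseteq B$. Shrinking $U$ to the locus of smooth fibres, each $F_b$ is a compact torus, and as $b$ ranges over the connected $U$ these cycles form a family through $L = F_{b_0}$, hence all lie in the single component $\gothB$; this gives a holomorphic classifying map $c\colon U \to \gothB$, $b\mapsto [F_b]$. Distinct fibres of $f$ are disjoint, so $c$ is injective, and since the general fibre is an $n$-dimensional Lagrangian subtorus we have $\dim B = n = \dim\gothB$ by Lemma~\ref{deflem}\,\refenum{i}; an injective holomorphic map with dense image between irreducible spaces of the same dimension is bimeromorphic, so $c$ is bimeromorphic. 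I would then count the members of $\gothB$ through a general point $x\in X$: for $x\notin X_\Delta$ the finite set $\epsilon^{-1}(x)$ is in bijection with these members, and one of them is the unique $f$-fibre $F_{f(x)}$. Any other smooth member $M\ni x$ satisfies $M = F_{b'}$ for some $b'$ because $c$ is dominant, and then $x\in M = F_{b'}$ forces $b' = f(x)$, whence $M = F_{f(x)}$. Thus exactly one member passes through a general point, $\deg\epsilon = 1$, and $\epsilon$ is bimeromorphic.

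The conceptual skeleton — translating the degree of $\epsilon$ into the number of deformations of $L$ through a general point, and identifying fibres of $f$ with members of $\gothB$ — is routine once Lemma~\ref{deflem} is available. I expect the genuine obstacle to lie in the forward implication, namely in verifying that $f$ is \emph{almost} holomorphic rather than merely meromorphic: this reduces to showing that, when $\deg\epsilon = 1$, the general member $L_b$ is disjoint from the discriminant-image $X_\Delta$, equivalently that the members of $\gothB$ are pairwise disjoint over a dense open subset. I would establish this from the local structure recorded in Lemma~\ref{deflem}, which identifies a small neighbourhood of each smooth fibre with a family over $\gothB$ and thereby prevents a general $L_b$ from meeting a second member away from a proper closed locus.
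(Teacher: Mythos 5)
Your forward direction follows exactly the paper's route (take $f=\pi\circ\epsilon^{-1}$), but the step you yourself flag as the ``genuine obstacle'' is in fact the entire content of this implication, and the way you propose to close it does not work. You assert that degree one ``forces $\epsilon$ to be injective over a dense Zariski-open set $V\subseteq \gothB\setminus\Delta$'', and later that the local structure of Lemma~\ref{deflem} ``prevents a general $L_b$ from meeting a second member''. Lemma~\ref{deflem}~(ii) is purely local around a smooth member: it makes members with \emph{nearby} parameter disjoint from $L_b$, and says nothing whatsoever about members whose parameter lies far away in $\gothB$. Indeed, when $\deg\epsilon>1$ the disjointness you want is false while Lemma~\ref{deflem} still holds (this is the situation the rest of the paper is about), so no argument using only that local structure can succeed: the hypothesis $\deg\epsilon=1$ must enter through a \emph{global} ingredient. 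The missing ingredient is Zariski's main theorem: $\epsilon\colon\gothU\to X$ is proper and, by assumption, bimeromorphic onto the smooth (hence normal) $X$, so all of its fibres are \emph{connected}. Thus if a point $x$ lies on two distinct members, the parameter set $\pi(\epsilon^{-1}(x))$ of members through $x$ is a connected analytic set with at least two points, hence positive-dimensional with no isolated points; in particular, parameters of members through $x$ accumulate at the parameter of \emph{each} member through $x$. If one of these members is smooth, this contradicts Lemma~\ref{deflem}~(ii). Consequently smooth members are pairwise disjoint and avoid the indeterminacy locus $\{x\in X \mid \dim\epsilon^{-1}(x)>0\}$ altogether, so one may take $V=\gothB\setminus\Delta$; this is what makes $f$ almost holomorphic, and properness of $f^{-1}(V)\to V$ follows since $\epsilon$ identifies $f^{-1}(V)$ with $\pi^{-1}(V)$. (A cosmetic point: the definition of a Lagrangian fibration requires a normal base, so one should compose with the normalisation of $\gothB$, as the paper notes.)

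Your converse genuinely differs from the paper's indicated route: the paper (following \cite[Lem.~3.2]{glr11a}) passes to a resolution of indeterminacies and uses \emph{its} Barlet space, whereas you build a classifying map $c\colon U\to\gothB$ directly from the fibres of $f$ over its holomorphy locus. That is a legitimate and arguably more economical approach, but one step is a non sequitur as written: from ``$c$ is dominant'' you conclude that every smooth member $M$ through a general point $x$ is a fibre of $f$. Dominance gives only that the \emph{general} member is a fibre; the members not in the image of $c$ form a set that could a priori still sweep out all of $X$, so a member through your chosen $x$ need not be among the fibres. To repair this, use the bimeromorphy of $c$ (which you did establish) to find a proper closed analytic subset $Z\subset\gothB$ containing $\gothB\setminus c(U)$, note that $\epsilon(\pi^{-1}(Z))\cup X_\Delta$ is a proper closed subset of $X$, and choose $x$ outside it; then every member through $x$ lies in the image of $c$, hence equals the unique fibre $F_{f(x)}$, giving $\deg\epsilon=1$.
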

If $\epsilon$ is birational, then $\pi\circ \inverse \epsilon$ is the desired almost holomorphic fibration (up to normalisation of  $\gothB$). For the other direction one uses the Barlet space of a resolution of indeterminacies.

\section{$L$-reduction and $L$-separable manifolds}
Let $X$ be a projective hyperk\"ahler manifold containing a Lagrangian subtorus $L$. In this section we start our analysis of the maps in the associated diagram~\eqref{diagram:Barlet}. Recall from Lemma~\ref{lem:epsilon birational} above that in order to answer Beauville's question positively we have to show that the evaluation map $\epsilon$ is birational.
\subsection{$L$-reduction}
Here, we construct a meromorphic map associated with the covering family $\{L_t\}_{t \in \gothB}$. Generically, this map is a quotient map for the meromorphic equivalence relation defined by the family $\{L_t\}$, i.e., generically it identifies those points in $X$ that cannot be separated by members of $\{L_t\}$.
\subsubsection{Construction of the $L$-reduction}
We work in the setup summarised in diagram \eqref{diagram:Barlet}. We set $\gothU_{\mathrm reg}:= \epsilon^{-1}(X \setminus X_\Delta)$. Recall from Remark \ref{epsilon etale} that the map
\[\epsilon|_{\gothU_{\mathrm reg}}\colon \gothU_{\mathrm reg} \to X \setminus X_\Delta \]
is finite \'etale; we denote its degree by $d$.

The map $\epsilon|_{\gothU_{\mathrm reg}}$ induces a morphism $X \setminus X_\Delta \to \mathrm{Sym}^d(\gothU_{\mathrm reg})$. Composing this map with the natural morphism $\mathrm{Sym}^d(\gothU_{\mathrm reg}) \to \mathrm{Sym}^d(\gothB)$ induced by $\pi\colon \gothU \to \gothB$, we construct a morphism $X \setminus X_\Delta \to \mathrm{Sym}^d(\gothB)$. This morphism naturally extends to a rational map $\psi\colon X \dasharrow \mathrm{Sym}^d(\gothB)$. Let
$$\begin{xymatrix}{\widetilde X\ar^p[d] \ar^-{\widetilde \psi}[r] & \mathrm{Sym}^d(\gothB)\\
X &
}\end{xymatrix}$$ be a resolution of singularities of the indeterminacies of $\psi$ with $\widetilde X$ nonsingular. The Stein factorisation of $\widetilde \psi$ then yields the following diagram.
\[\begin{xymatrix}{
X\ar@{-->}[dr]_{\phi_L} &\ar[l]_p\widetilde X \ar[rd]^{\widetilde \psi} \ar[d]^{\widetilde \phi}\\
&  \gothT\ar[r] & \mathrm{Sym}^d(\gothB).
}
\end{xymatrix}
\]
Here, $\phi_{L}=\tilde\phi\circ\inverse p\colon X \dasharrow \gothT$ is the rational map induced by $\widetilde \phi$. Noting that $\phi_L: X \dasharrow \gothT$ is unique up to birational equivalence, and hence canonically associated with the pair $(X, L)$, we call it \emph{the $L$-reduction} of $X$.
\begin{rem}\label{remark nice fibre of phi}
For every point $x \in X \setminus X_{\Delta}$ there are exactly $d$ pairwise distinct smooth tori $L_1, \dots, L_d$ in the family $\{L_t\}_{t \in \gothB}$ containg $x$. By construction, $\phi_L$ is defined at $x$ and maps it to 
 the class of $([L_1], \dots, [L_d])$ in $\mathrm{Sym}^d(\gothB)$.
\end{rem}

\subsubsection{First properties of the $L$-reduction}
The following set-theoretical assertion is an immediate consequence of the construction of $\phi_L$.
\begin{lem}\label{lem: F_x}
The fibre of $\phi_L$ through a point  $x\in X \setminus X_\Delta$ coincides with the connected component of
\[
\bigcap_{[M]\in \gothB,\, x \in M} M
\]
containing $x$.
\end{lem}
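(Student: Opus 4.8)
The plan is to reduce the statement to the auxiliary map $\psi\colon X\dasharrow \mathrm{Sym}^d(\gothB)$ from whose Stein factorisation $\phi_L$ was built, and then to track the passage to connected components. Throughout, fix a point $x\in X\setminus X_\Delta$ and first pin down the intersection in the statement. Since $x\notin X_\Delta$, no singular member of $\gothB$ passes through $x$, so every $M\in\gothB$ with $x\in M$ is one of the $d$ smooth tori $L_1,\dots,L_d$ provided by Remark~\ref{remark nice fibre of phi}, and conversely each $L_i$ contains $x$. Hence
\[
\bigcap_{[M]\in\gothB,\ x\in M} M \;=\; L_1\cap\dots\cap L_d \;=:\; I,
\]
so the task reduces to identifying the fibre $F_x$ of $\phi_L$ through $x$ with the connected component $G_x$ of $I$ containing $x$.

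Next I would compute the fibres of $\psi$ over the \'etale locus. For any $y\in X\setminus X_\Delta$ the finite \'etale map $\epsilon|_{\gothU_{\mathrm{reg}}}$ of degree $d$ exhibits exactly $d$ members of $\gothB$ through $y$, and by Lemma~\ref{deflem}\,\refenum{ii} these are pairwise distinct; thus $\psi(y)$ records precisely the unordered set of tori through $y$. I then claim that $\psi(y)=\psi(x)$ if and only if $y\in I$. Indeed, if the two images agree then $y$ lies on each $L_i$ and hence $y\in I$; conversely, if $y\in I$ then $y$ lies on the $d$ distinct smooth tori $L_1,\dots,L_d$, which must therefore exhaust the $d$ members through $y$, whence $\psi(y)=\psi(x)$. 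This gives the key set-theoretic equality $\psi^{-1}(\psi(x))\cap(X\setminus X_\Delta)=I\cap(X\setminus X_\Delta)$.

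Finally I would push this through the Stein factorisation $\widetilde\psi=g\circ\widetilde\phi$, in which $g\colon\gothT\to\mathrm{Sym}^d(\gothB)$ is finite and $\widetilde\phi$ has connected fibres. Because $g$ has discrete fibres, the fibre of $\phi_L=\widetilde\phi\circ\inverse p$ through $x$ is precisely the connected component through $x$ of the corresponding fibre of $\psi$; combined with the previous step this identifies $F_x\cap(X\setminus X_\Delta)$ with the connected component of $I\cap(X\setminus X_\Delta)$ containing $x$.

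The one point that genuinely needs care---and which I expect to be the main obstacle---is the reconciliation over the boundary $X_\Delta$: one must argue that deleting the proper closed subset $X_\Delta$ neither detaches a piece of $G_x$ nor, after taking closures, creates spurious components, so that the connected component of $I\cap(X\setminus X_\Delta)$ through $x$ closes up to the connected component $G_x$ of the full closed intersection $I$. Here I would exploit that $X_\Delta$ is a proper closed subset, together with the control on intersections of members of $\gothB$ supplied by Remark~\ref{epsilon etale}\,\refenum{ii}---which forbids finite intersections and so keeps $I$ positive-dimensional with $X_\Delta\cap I$ properly contained---to conclude that $F_x=G_x$.
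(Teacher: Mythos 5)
Your proposal is correct and follows essentially the same route as the paper's proof: use the \'etaleness of $\epsilon$ over $X \setminus X_\Delta$ to identify the $d$ pairwise distinct smooth tori through $x$, identify the fibre of $\psi$ through $x$ with $L_1 \cap \dots \cap L_d$, and pass to the connected component through $x$ via the Stein factorisation. The boundary subtlety over $X_\Delta$ that you flag as the main remaining obstacle is not addressed in the paper either---it simply asserts the set-theoretic equality $\psi^{-1}(\psi(x)) = \bigcap_i L_i$ and concludes---so on this point your write-up is, if anything, more careful than the published proof.
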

\begin{proof}
If $x\in X\setminus X_\Delta$, then $\epsilon $ is \'etale in every point of the preimage $\inverse\epsilon(x)$. Thus the image $\pi(\inverse\epsilon(x))=\{[L_1], \dots, [L_d]\}$ consists of the points in $\gothB$ that parametrise the $d$ pairwise distinct subtori in $X$ through $x$. In particular, the meromorphic map $\psi\colon X\dasharrow \mathrm{Sym}^d(\gothB)$ is defined at $x$ and its fibre is
\begin{equation}\label{eq:intersect}
\inverse\psi(\psi(x))=\bigcap_i L_i.
\end{equation}
After taking the Stein factorisation, the fibre of $\phi_L$ is the component of \eqref{eq:intersect} through $x$, as claimed.
\end{proof}

\begin{lem}\label{lem: L-reduction almost holomorphic}
 Let $X$ be a projective hyperk\"ahler manifold containing a Lagrangian subtorus $L$. Then the $L$-reduction $\phi_{L}\colon X \dasharrow \gothT$ is almost holomorphic.
\end{lem}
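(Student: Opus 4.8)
The plan is to apply the standard criterion that a dominant meromorphic map is almost holomorphic if and only if its general fibre is disjoint from the indeterminacy locus, equivalently if and only if the indeterminacy locus does not dominate the target. First I would observe that the morphism $\psi$, and hence $\phi_L$, is holomorphic on $X\setminus X_\Delta$ by construction and by Lemma~\ref{lem: F_x}, so that the indeterminacy locus of $\phi_L$ is contained in $X_\Delta$. It therefore suffices to prove that $X_\Delta$ does not dominate $\gothT$ under $\phi_L$, i.e.\ that the general fibre $\overline{F_x}$ is disjoint from $X_\Delta$. Granting this, I would set $U:=\gothT\setminus\overline{\phi_L(X_\Delta)}$ and conclude: since $\widetilde\phi$ is proper and $p$ is an isomorphism over $X\setminus X_\Delta$, the map $\phi_L$ is holomorphic and proper over the nonempty Zariski-open set $U$, which is exactly the definition of almost holomorphicity.

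For the dominance statement I would first dispose of the case in which $\phi_L$ is generically finite, i.e.\ $\dim\gothT=\dim X$: here $X_\Delta$, being a proper closed subset of dimension at most $\dim X-1$, simply cannot dominate $\gothT$. In the remaining case the general fibre is positive-dimensional, and I would exploit the finite morphism $\gothT\to\mathrm{Sym}^d\gothB$ provided by the Stein factorisation, which gives $\dim\gothT=\dim\overline{\psi(X\setminus X_\Delta)}$. By Remark~\ref{remark nice fibre of phi} the image of a general point is a reduced $d$-tuple $\{[L_1],\dots,[L_d]\}$ of pairwise distinct smooth tori; in particular the general point of $\overline{\psi(X\setminus X_\Delta)}$ lies off the closed \emph{discriminant} $Z\subset\mathrm{Sym}^d\gothB$ consisting of the non-reduced tuples together with the tuples that meet the discriminant $\Delta\subset\gothB$. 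Hence $\overline{\psi(X\setminus X_\Delta)}\cap Z$ is a proper closed subset of dimension strictly smaller than $\dim\gothT$, and it would be enough to show that $\phi_L$ maps $X_\Delta$ into the preimage of $Z$ in $\gothT$.

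The heart of the matter is precisely this last claim. For $y\in X_\Delta$ the value of the extended map is a limit of tuples $\psi(x_k)$ with $x_k\to y$, and using properness of $\pi$ and of $\epsilon$ I would track the limits $u_i^\infty\in\epsilon^{-1}(y)$ of the $d$ étale preimages and argue that, since $\epsilon$ fails to be finite étale of degree $d$ exactly over $X_\Delta$ (Remark~\ref{epsilon etale}), the limiting cycle is either non-reduced or acquires a component parametrised by $\Delta$, so that it lands in $Z$. The main obstacle I anticipate is exactly this degeneration analysis: a priori a point $y\in X_\Delta$ could lie on $d$ smooth tori that are the honest limits of the nearby fibres, while the singular member responsible for $y\in X_\Delta$ is an extra preimage invisible to the limit tuple. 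Ruling this scenario out, or else absorbing the points where it occurs into a lower-dimensional locus, is where I expect to need a careful study of the fibres $\epsilon^{-1}(y)$ and of the semicontinuity of the number of members of $\gothB$ through a point, rather than only the generic count $d$ valid over $X\setminus X_\Delta$.
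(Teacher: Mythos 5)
Your reduction is sound as far as it goes: the criterion via non\-dominance of the indeterminacy locus (interpreted through the graph) is correct, $\phi_L$ is indeed holomorphic on $X\setminus X_\Delta$, and the generically finite case is disposed of correctly. But the heart of the lemma is exactly the step you leave open, and the route you sketch for it aims at a containment that is too strong. You would need $\widetilde\psi\bigl(p^{-1}(X_\Delta)\bigr)\subset Z$, and the scenario you yourself flag defeats this: a point $y\in X_\Delta$ may lie on $d$ pairwise distinct smooth tori which are the honest limits of the tuples at nearby general points, while the singular member responsible for $y\in X_\Delta$ is invisible to those limits; then the limit tuple is reduced and disjoint from $\Delta$, hence off $Z$. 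This scenario cannot be excluded by degree counting: since $\deg\epsilon=d$, the extra preimage of $y$ on $\gothU_\Delta$ cannot be an \emph{isolated} point of $\epsilon^{-1}(y)$ (an isolated fibre point of a map between pure $2n$-dimensional spaces with smooth target has local degree at least one, which would force more than $d$ preimages at nearby general points), but nothing prevents $\epsilon^{-1}(y)$ from having a positive-dimensional component inside $\gothU_\Delta$, i.e.\ infinitely many singular members through $y$ sweeping out a proper analytic subset near $y$. Retreating to the honest indeterminacy locus instead of $X_\Delta$ would make the containment plausible, but then deciding which points of $X_\Delta$ are actually points of indeterminacy is precisely the content of the lemma, and that analysis is what your proposal defers. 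Note also that your intermediate goal --- the general fibre avoids \emph{all} of $X_\Delta$ --- is a priori stronger than almost holomorphicity, and it is not clear it can be established without already knowing the fibration structure one is trying to produce.

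The missing idea is supplied in the paper by an argument of a different nature, which never analyses limits of tuples over $X_\Delta$. One argues by contradiction on the graph $X'\subset X\times\gothT$ of $\phi_L$: if an indeterminacy point $z$ lay on the fibre $F_{x_0}$ through a \emph{general} point $x_0$, then, since $X$ is normal and the projection $X'\to X$ is birational with connected fibres, the set $C'=\phi_L'\bigl(p^{-1}(z)\bigr)$ is a connected, positive-dimensional subvariety of $\gothT$ through $\phi_L(x_0)$, and $z$ lies on \emph{every} fibre over $C'$. Taking a local holomorphic section $C\ni x_0$ of $\phi_L$ over $C'$ (possible since $\phi_L$ is a submersion near the general point $x_0$) and a torus $L_k\ni x_0$ with $C\not\subset L_k$, part (ii) of Lemma~\ref{deflem} says that small deformations of $L_k$ fibre a neighbourhood of $x_0$, so for $y\in C$ near $x_0$ the fibre $F_y$ lies on a torus $L_y$ \emph{disjoint} from $L_k$; yet $z\in F_y\cap F_{x_0}\subset L_y\cap L_k$ by Lemma~\ref{lem: F_x}, a contradiction. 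The key input is thus the local disjointness of small deformations of a Lagrangian torus, not any semicontinuity of the number of members of $\gothB$ through a point; this is the mechanism your proposal lacks.
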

\begin{proof}
Let $\mathrm{dom}(\phi_L)$ be the domain of definition of $\phi_L$, and let $Z:= X\setminus \mathrm{dom}(\phi_L)$ be the locus where $\phi_L$ is not defined. We have to show that the general fibre of $\phi_L$ does not intersect $Z$.
 
Aiming for a contradiction, suppose that for a general $x_0 \in X \setminus X_\Delta$ the fibre $F_{x_0}$ of $\varphi_L$ through $x_0$ intersects $Z$ nontrivially. Recall from item \refenum{i} of Remark \ref{epsilon etale} that $X_\Delta = \epsilon(\gothU_\Delta)$ is the locus swept out by singular deformations of $L$ and from Remark \ref{remark nice fibre of phi} that $\phi_L$ is holomorphic on $X\setminus X_\Delta$. Take a point $z\in F_{x_0} \cap Z$.
Consider the graph $X'\subset X\times \gothT$ of $\varphi_L$ with projections $p\colon X' \to X$ and $\phi_L'\colon X' \to \gothT$.
As explained for example in \cite[Sect.~1.39]{Debarre}, the closed subset $Z$ can be described as
\begin{equation}\label{eq:center}
Z = \{x \in X \mid \dim p^{-1}(x) >0\}.
\end{equation}
As $X$ is normal and $p$ is birational, $p$ has connected fibres. Thus, the variety $C':=\phi_L'(p^{-1}(z))$ is connected. We list some further properties of $C'$:
\begin{enumerate}
	\item $\dim C' > 0$, because $\dim p^{-1}(z)>0$ and $X'$ is the graph of $\phi_L$,
	\item $\varphi_L(x_0) \in C'$ as $z \in F_{x_0}$,
	\item the point $z$ is contained in all fibres over points in $C'$.
\end{enumerate}

Suppose for the moment that we had a diagram
\[
\xymatrix{
C \ar@{^(->}[r] \ar[d]^{\phi_L\vert_C}& X \ar@{-->}[d]^{\phi_L}\\
C' \ar@{^(->}[r] & \gothT
}
\]
such that $C$ is connected, $x_0 \in C$ and $\phi_L\vert_C$ is a local isomorphism at the point $x_0$.  We claim that this would produce a contradiction. Namely, let $L_1, \dots, L_d$ be the $d=\deg\epsilon$ pairwise distinct tori in the family $\{L_t\}_{t \in \gothB}$ containing $x_0$. Since $C$ is connected and $C \not \subset F_{x_0}$ by item \refenum{i} above, Lemma~\ref{lem: F_x} implies that there exists $k \in \{1, \dots, d\}$ such that $C \not \subset L_k$. By Lemma \ref{deflem}, small deformations of $L_k$ constitute a fibration in an analytic neighbourhood of $x_0$. Thus, for all points $y\in C\setminus \{x_0\}$ sufficiently close to $x_0$ there is a small deformation $L_y$ of $L_k$ with $y\in L_y$ and 
\begin{equation}\label{eq:emptyintersect}
 L_y\cap L_k = \emptyset.
\end{equation}
On the other hand, item \refenum{iii} above and Lemma~\ref{lem: F_x} imply that 
\[
z \in F_y \cap F_{x_0}  \subset L_y \cap L_k,
\]
which in view of \eqref{eq:emptyintersect} is absurd.

It remains to find the variety $C$. We observe that it suffices to construct $C$ in an Euclidean open neighbourhood of $x_0$. Invoking the generality assumption on $x_0$ and the implicit function theorem we find a small neighbourhood $U\ni x_0$ such that the restriction $\phi_L\colon U \to V:=\phi_L(U)$  is a trivial holomorphic fiber bundle. In particular, $V \subset \gothT$ is open and there is a section $C \subset U$ for the subvariety $C' \cap V$. The only remaining property to be fulfilled is connectedness of $C'\cap V$ and $C$. This may be achieved by shrinking $V$ and $U$, and so we conclude the proof.
\end{proof}

\begin{defin}
A projective hyperk\"ahler manifold $X$ containing a Lagrangian subtorus $L$ is called \emph{$L$-separable} if its $L$-reduction $\phi_{L}\colon X \dasharrow \gothT$ is birational.
\end{defin}

\subsection{Lagrangian fibrations on non-$L$-separable manifolds}

\begin{theo}\label{thm:Lsep}
Let $X$ be a projective  hyperk\"ahler manifold and $L\subset X$ a
Lagrangian subtorus.
Then $X$ admits an almost holomorphic fibration with strong fibre $L$  if and only if $X$ is not $L$-separable.
\end{theo}
As a consequence of this result we can reformulate Beauville's question in the following way.
\begin{custom}[Question B\textquotesingle\textquotesingle]
 Does there exist a projective hyperk\"ahler manifold $X$ together with a Lagrangian subtorus $L$ such that $X$ is $L$-separable?
\end{custom}
\begin{proof}[Proof of Theorem~\ref{thm:Lsep}]
If $X$ is not $L$-separable, the $L$-reduction $\phi_L\colon X \dasharrow \gothT$ is an almost holomorphic map (Lemma \ref{lem: L-reduction almost holomorphic}) such that $0<\dim \gothT < \dim X$. Thus by \cite[Thm.~6.7]{glr11a}, the map $\phi_L$ is an almost holomorphic Lagrangian fibration on $X$. By the description of the general fibre of the $L$-reduction (Lemma \ref{lem: F_x}), the torus $L$ is a strong fibre of $\phi_L$.

If conversely $f\colon X\dasharrow B $ is an almost holomorphic Lagrangian fibration with strong fibre $L$, then through the general point there is a unique Lagrangian subtorus in $\gothB$ and the $L$-reduction coincides with the rational map $\pi\circ\inverse \epsilon\colon X\dashrightarrow \gothB$. In particular, $X$ is not $L$-separable.
\end{proof}

\section{Intersections of Lagrangian subtori} 
As before, let $X$ be a projective hyperk\"ahler manifold containing a Lagrangian sub\-torus $L$. In this section we study a neighbourhood of $L$ in $X$ more closely, which leads to several results about the geometry of intersections of different members in the family $\gothB$ of deformations of $L$. We are going to use the notation and the results of Section~\ref{sect:deform} throughout.

By Lemma \ref{deflem}, $\gothB$ is smooth at $[L]$ and we can find a neighbourhood $V$ of $[L]$ such that the restriction
$\epsilon\restr\gothU_V\colon  \gothU_V \to X$ of the evaluation map to the preimage $\gothU_V:= \pi^{-1}(V)$ embeds $\gothU_V$ into $X$. We may thus consider $\gothU_V$ as an open subset of $X$. The intersection of $\gothU_V$ with a submanifold  $M\subset X$ is depicted in Figure \ref{fig: M cap L}.

\begin{figure}[ht] 
\begin{tikzpicture}
[
scale=.5,
outside/.style = {black!30},
rand/.style = {black!60},
>=stealth
]
\tikzstyle{every node}=[font=\tiny]

\clip (-5,-1.1) rectangle (5,12.3);

\begin{scope}[rand]
\draw (0,11) ellipse (2 and 1);
\draw (-2,4) arc (180:360:2 and 1);
\draw[outside, densely  dotted] (2,4)  arc (0:180:2 and 1);
\draw (-2,11) -- (-2,4) ++ (4,0) -- (2,11);
\end{scope}
\node at (-2,11) [above left] {$\gothU_V$};
\draw[thick] (0,11) node [above right] {$L$} -- (0,4);

\draw[rand] (0,0) ellipse (2 and 1);
\draw [->, semithick] (0, 2.8) to node [right]  {$\pi$} (0,1.2) ;
\node (L) at (0,0) [fill,circle,inner sep=1.2pt] {};
\draw (L) node [anchor=north] {$[L]$};
\node at (-2,.6) [above ] {$V$};

\draw[outside] (-3, 8.5) -- (-2,9);
\draw  (-2,9) .. controls +(1,.5) .. ++ (2,0) .. controls +(1,-.5) .. ++ (2,0)  ;
\draw[outside] (2,9) .. controls +(2,1) and (4,8) .. (2,8);
\draw (0,7.5) .. controls +(-1,-.5)  .. +(-2,-.5)  (0,7.5) .. controls +(1,.5) .. +(2,.5);
\draw[outside] (-2,7) .. controls (-4,7) and (-4, 5,5) .. (-2,6);
\draw[dashed]  (-2,6) .. controls +(1,.5) .. ++ (2,0) .. controls +(1,-.5) .. ++ (2,0) node [above left] {$S$};
\draw[outside] (3, 6.5) -- (2,6);

\node at (3,6) {$M$};

\draw[dashed]  (-2,0) .. controls +(1,.5) .. ++ (2,0) .. controls +(1,-.5) .. ++ (2,0) node [below right] {$C$};
\draw (0,0) .. controls +(-1,-.5)  .. (210:2 and 1)  (0,0) .. controls +(1,.5) .. ++ (30:2 and 1);

\end{tikzpicture}
\caption{The neighbourhood $\gothU_V$ of $L$ and its projection to $V\subset \gothB$.}\label{fig: M cap L}
\end{figure}

\begin{lem} \label{lem:smooth intersection} Let $M \subseteq X$ be a smooth and proper submanifold, and $L \subset X$ a smooth
Lagrangian torus that intersects $M$ nontrivially. Then a generic small deformation of $L$ 
 has smooth intersection with $M$.
\end{lem}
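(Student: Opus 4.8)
The plan is to exploit the local product structure of the family near $L$ recorded above. By Lemma~\ref{deflem} and the choice of $V$, the evaluation map embeds $\gothU_V$ as an open subset of $X$, and $\pi\colon\gothU_V\to V$ is a holomorphic submersion whose fibres are exactly the small deformations $L_t$ of $L$. I would deduce smoothness of $M\cap L_t$ for generic $t$ from a Sard-type genericity argument applied to the restriction of $\pi$ to $M$.

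The decisive observation is the simplification furnished by the embedding $\gothU_V\hookrightarrow X$: since $\gothU_V$ is \emph{open} in $X$ and $M$ is a submanifold, the intersection
\[
S:=M\cap \gothU_V
\]
is an open subset of $M$, hence itself a complex submanifold of $\gothU_V$. Restricting $\pi$ produces a holomorphic map $\pi|_S\colon S\to V$ of complex manifolds, and $S\neq\emptyset$ because $L\cap M\neq\emptyset$ and $[L]\in V$. The key point is that for $t\in V$ the fibre $(\pi|_S)^{-1}(t)$ coincides with $L_t\cap M$, as $L_t=\pi^{-1}(t)\subset\gothU_V$.

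I would then apply Sard's theorem: the critical values of $\pi|_S$ form a negligible subset of $V$, so a generic $t\in V$ is a regular value. For such $t$ I would convert regularity into transversality. At any $x\in L_t\cap M=(\pi|_S)^{-1}(t)$ we have $T_xS=T_xM$ and $\ker(d\pi_x)=T_xL_t$ inside $T_x\gothU_V=T_xX$; hence surjectivity of $d(\pi|_S)_x\colon T_xM\to T_tV$ is equivalent to $T_xM+T_xL_t=T_xX$, that is, to $M$ and $L_t$ meeting transversally at $x$. Since a transversal intersection of submanifolds is smooth, $M\cap L_t$ is smooth for generic $t$, which is the assertion.

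The differential-geometric identifications in the previous paragraph are routine; the points that require genuine care are the meaning of genericity and the role of the properness hypothesis. Sard's theorem only guarantees that the bad parameters lie in a set of measure zero, which is why the conclusion is stated for a \emph{generic} deformation; should a Zariski-dense open set of good parameters be preferable, one passes to an \'etale algebraic model of $V$ and invokes generic smoothness in characteristic zero. Properness of $M$ is not needed for the smoothness conclusion itself; it presumably enters only to guarantee that a nearby intersection $L_t\cap M$ stays nonempty, keeping the statement non-vacuous. I expect this bookkeeping, rather than any substantial geometric difficulty, to be the only delicate aspect.
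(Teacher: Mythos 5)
Your core argument is correct and is essentially the paper's argument: both proofs set $S := M \cap \gothU_V$, observe that $S$ is smooth because $\gothU_V$ is open in $X$, identify the fibres of $\pi|_S$ with the intersections $L_t \cap M$, and invoke a genericity-of-fibres theorem. The only real divergence is which theorem is invoked. The paper applies the theorem on generic smoothness to $\pi|_S$, and this is precisely where properness of $M$ enters: $\pi|_S$ is proper because $\pi$ is proper and $M$ is compact, and in the analytic category properness is what turns the set of critical values into an analytic subset of $V$ (Remmert's proper mapping theorem applied to the critical locus); since that set has measure zero by Sard, it is then a \emph{proper} analytic subset, and all fibres over its complement are smooth. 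Your Sard-only version yields the weaker conclusion that the bad parameters merely form a set of measure zero rather than a nowhere-dense closed analytic set, which is the form of genericity that meshes cleanly with the paper's later arguments, where several such conditions are simultaneously imposed at a general point of $X$ (Corollary~\ref{cor:degree}, Proposition~\ref{prop: not L-sep}).

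Two claims in your closing paragraph are wrong and should be corrected. First, properness of $M$ is not there to keep $L_t\cap M$ nonempty, and indeed it cannot do so: if, for instance, $M \subset L$, then by part (ii) of Lemma~\ref{deflem} every sufficiently small deformation of $L$ is disjoint from $L$, hence from $M$, so \emph{all} nearby intersections are empty (the lemma makes no nonemptiness claim and is vacuously true in that case). The actual role of properness is the one described above: upgrading ``measure zero'' to ``contained in a proper analytic subset of $V$''. Second, the proposed repair via ``an \'etale algebraic model of $V$'' is not available: $S = M\cap \gothU_V$ is an analytic object ($\gothU_V$ is a small analytic neighbourhood of $L$, and $V$ a small analytic neighbourhood of $[L]$), so the algebraic generic smoothness theorem cannot be quoted directly; the correct substitute is exactly the proper-map argument you set aside.
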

\begin{proof} We continue to use the notation introduced above.
Since  $\gothU_V$ is open in $X$, the intersection $M \cap \gothU_V$ is smooth. Furthermore, the map $\pi\restr{M \cap \gothU_V}\colon M \cap \gothU_V \to V$ is proper, because $\pi$ is proper and $M$ is compact. We can therefore apply the theorem on generic smoothness to $\pi\restr{M \cap \gothU_V}$ which proves the result.
\end{proof}

\begin{prop} \label{torus intersection} Let $M\subseteq X$ be a compact submanifold and $L \subseteq X$ be a general Lagrangian subtorus,  such that $L\cap M \neq \emptyset$. Then $N_{L\cap M/M}$ is trivial. If $M$ is a complex torus, then $L\cap M$ is a disjoint union of tori.
\end{prop}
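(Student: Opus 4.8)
The plan is to exploit the local fibration structure of the family of deformations of $L$ for the statement about the normal bundle, and then to combine the resulting triviality with the compactness of the intersection to obtain the statement about tori.

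First I would treat the normal bundle. By Lemma~\ref{deflem} the Barlet space $\gothB$ is smooth at the point parametrising our general $L$, so, exactly as in the discussion preceding Lemma~\ref{lem:smooth intersection}, I would choose a neighbourhood $V$ of $[L]$ in $\gothB$ such that $\epsilon$ embeds $\gothU_V=\pi^{-1}(V)$ as an open subset of $X$ and $\pi\colon\gothU_V\to V$ becomes a holomorphic submersion whose fibres are the deformations of $L$. Setting $S:=M\cap\gothU_V$ and $Z:=L\cap M$, the set $S$ is an open submanifold of $M$, and since $L\subset\gothU_V$ we have $Z\subset S$ with
\[
Z=(\pi|_S)^{-1}([L]).
\]
As $M$ is compact, $\pi|_S$ is proper, and generic smoothness (the tool behind Lemma~\ref{lem:smooth intersection}) makes the general fibre $Z$ smooth with $\pi|_S$ submersive along it. For a submersion the normal bundle of a fibre is the restriction of the pullback of the base tangent bundle, and since $S$ is open in $M$ this gives
\[
N_{Z/M}=N_{Z/S}\cong(\pi|_S)^*T_V\big|_Z,
\]
which is trivial with fibre $T_{[L]}V$. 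This settles the first claim.

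For the second assertion I would assume $M=V/\Lambda$ is a complex torus with universal cover $V$, and keep the notation $Z=L\cap M$, now of dimension $d$ and with trivial normal bundle by the above. Trivialising $T_M$ and $\Omega^1_M$ by translation, the conormal sequence reads
\[
0\longrightarrow N^\vee_{Z/M}\longrightarrow\Omega^1_M\big|_Z\longrightarrow\Omega^1_Z\longrightarrow 0,
\]
where the first two terms are trivial bundles. The key step is to show that the Gauss map of $Z$, sending a point to the translate of its tangent space inside $V$, is constant on each connected component. Indeed, a trivialising frame of $N^\vee_{Z/M}$ consists of holomorphic maps from $Z$ to the space of translation-invariant $1$-forms; since $Z$ is compact these maps are locally constant, so on each component they span a fixed subspace $W^\vee$. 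As the conormal fibre at a point $z$ is the annihilator of $T_zZ$, the tangent space $T_zZ=(W^\vee)^\perp=:W\subset V$ is then independent of $z$, so the Gauss map is constant.

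Finally I would deduce that a connected compact complex submanifold $Z_0$ with constant Gauss map $W$ is a translate of a subtorus. After translating $0\in Z_0$, the local normal form for a submanifold with constant tangent space shows that $Z_0$ coincides near $0$ with the image in $M$ of the linear subspace $W$; this image is a connected subgroup, and an open-and-closed argument inside it, using that $Z_0$ is closed, forces the whole immersed subgroup to lie in $Z_0$. Comparing dimensions with the compact $Z_0$ then shows that its closure is a $d$-dimensional complex subtorus equal to $Z_0$. Applying this to each component yields that $Z=L\cap M$ is a disjoint union of tori. The hard part will be precisely this last step: passing from the infinitesimal statement (constant Gauss map) to the global conclusion requires controlling the closure of the possibly non-closed subgroup generated by $W$, and this is exactly where the compactness of $L\cap M$ is indispensable.
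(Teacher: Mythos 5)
Your second half (the deduction that $L\cap M$ is a union of tori once the normal bundle is known to be trivial) is correct, and in fact a nice self-contained variant of the paper's argument: the paper instead feeds the triviality of $N_{T/M}$ and of $T_M\restr{T}$ into the normal bundle sequence to get $T_T$ trivial and then quotes the fact that a compact K\"ahler parallelizable manifold is a torus, while you make the same mechanism (holomorphic sections of trivial bundles on a compact manifold are constant) explicit via the constant Gauss map and the leaf of the translation-invariant foliation. That part would survive scrutiny.

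The first half, however, contains a genuine error, and it occurs exactly at the point the proposition is designed to handle. You claim that generic smoothness makes $\pi|_S\colon S\to V$ \emph{submersive} along $Z=L\cap M$, and hence $N_{Z/M}\cong(\pi|_S)^*T_V|_Z$, a trivial bundle of rank $n=\dim V$. Submersivity of $\pi|_S$ at $z\in Z$ onto $V$ is equivalent to $T_zM+T_zL=T_zX$, i.e.\ to \emph{transversality} of $L$ and $M$ at $z$; but the whole point is that the relevant intersections are non-transverse. In the key application ($X$ a fourfold, $M$ another Lagrangian torus of the family $\gothB$), Remark~\ref{epsilon etale}~\refenum{ii} forbids finite intersections, so $Z$ is a curve and $N_{Z/M}$ has rank $1$, whereas your formula produces rank $2$ --- the ranks do not even match. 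The source of the error is that generic smoothness says nothing about submersivity onto $V$ when $\pi|_S$ is not dominant: $L$ is only general subject to the constraint $L\cap M\neq\emptyset$, so $[L]$ is a general point of the \emph{image} $C:=\pi(S)$, which is in general a proper subvariety of $V$ (of dimension $\dim S-\dim Z$), and critical values of a non-dominant map are not ruled out by Sard-type statements over the complement of the image. The fix is to run your argument relative to $C$ rather than $V$: for $[L]$ general in $C$ one can arrange $C$ smooth at $[L]$ and $\pi|_S\colon S\to C$ submersive along $Z$, giving $N_{Z/M}=N_{Z/S}\cong\ko_Z\otimes T_{[L]}C$, trivial of the correct rank. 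Making "general in $C$" legitimate requires working with the global family $\epsilon^{-1}(M)\to\pi(\epsilon^{-1}(M))$ rather than the local picture; this is precisely the care the paper takes, phrased there via the classifying map $\chi\colon C\to\mathscr{D}(M)$ and the observation that $T_C([L])$ injects into the space of \emph{nowhere vanishing} sections of $N_{T/M}$ (nonzero deformations of $L$ move $T$ completely off itself by Lemma~\ref{deflem}~\refenum{ii}), after which a dimension count trivialises the bundle.
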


\begin{proof}
As $L$ is general, the intersection $L\cap M$ is smooth by Lemma~\ref{lem:smooth intersection}. Moreover, both statements can be verified by looking at one connected component of $L\cap M$ at a time. We invoke the notation introduced in the beginning of this section, and let $T$ be a connected component of $L\cap M$. If $V$ is sufficently small, then the inclusion $L\cap M \into \gothU_V \cap M$ induces a one-to-one correspondence of their respective connected components. Let $S$ be the unique component of $\gothU_V \cap M$ corresponding to $T$. By generality of $L$ we may assume that $\pi\restr {S}$ is a smooth map, thus  $C:=\pi(S)\subset V$ is smooth of dimension $n-\dim T$ near $[L]$. Moreover, $C$ parametrizes those small deformations of $L$ that induce a flat deformation of $T$ inside $M$. Corresponding to the family $S\to C$ we thus obtain a classyfying map $\chi\colon C \to \mathscr{D}\left(M\right)$ from $C$ to the Douady-space of $M$.

On the level of tangent spaces we have $T_{C}([L])\subset T_\gothB([L])=H^0\bigl(L,\, N_{L/X}\bigr)$, where the last equality comes from the Hilbert-Chow morphism, compare \cite[Lem.~3.1]{glr11a}. The morphism $\chi$ induces a map $\chi_*:T_{C}([L]) \to H^0\bigl(T,\, N_{T/M}\bigr)$. But small deformations of $T$ inside $M$ induced by deformations of $L$ are disjoint from $T$ by Lemma \ref{deflem} \refenum{ii}. Thus the map $\chi_*$ is injective, and the image of $T_{C}([L])$ consists of nowhere vanishing sections. For dimension reasons these sections generate the normal bundle of $T$ in $M$, and consequently $N_{T/M}$ is trivial, as claimed.	

\enlargethispage*{2\baselineskip}
If $M$ is a torus as well, then $T_M\restr{T}$ is likewise trivial. So, by the normal bundle sequence
\[
\xymatrix{
0\ar[r] & T_{T} \ar[r] & T_M\restr{T}\ar[r] & N_{T/M} \ar[r] & 0 \\
}
\]
 the tangent bundle $T_{T}$ is trivial, and thus $T$ is a complex torus.
\end{proof}
Based on the preceeding result we can now refine the observation in Remark~\ref{epsilon etale}(ii):

\begin{lem}\label{lem:elliptic}
 Let $X$ be a four-dimensional hyperk\"ahler manifold. Let $L$ and $M$ be two Lagrangian tori intersecting smoothly, and set $I = L \cap M$.
Then, $I$ is a finite disjoint union of elliptic curves.
\end{lem}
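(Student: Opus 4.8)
The plan is to apply Proposition~\ref{torus intersection} to the pair of Lagrangian tori $L$ and $M$, using that in a hyperk\"ahler fourfold each torus has (complex) dimension $n = 2$. Since $L$ and $M$ are assumed to intersect smoothly, and since a general deformation of $L$ satisfies the conclusion of the proposition, I first want to reduce to the situation where $L$ is \emph{general}. Concretely, $M$ is a Lagrangian torus and hence in particular a compact submanifold that is a complex torus, so Proposition~\ref{torus intersection} tells us that for a general member of the family $\gothB$, its intersection with $M$ is a disjoint union of tori. The mild subtlety is that the hypothesis names a \emph{specific} $L$ intersecting $M$ smoothly, not a general one; I would address this either by noting that the smoothness hypothesis already lets us run the tangent-space argument of the proposition verbatim, or by remarking that the statement is deformation-invariant in the relevant range so it suffices to check it for general $L$, whose intersection with $M$ is then a disjoint union of complex subtori.

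Granting that $I = L \cap M$ is a finite disjoint union of complex tori, the remaining work is purely dimension-theoretic: I must show each component is $1$-dimensional, i.e.\ an elliptic curve. Here I would use Remark~\ref{epsilon etale}\refenum{ii}, which says that the intersection product $[L].[M]$ vanishes and that it is impossible for two members of $\gothB$ to meet in finitely many points; this rules out $0$-dimensional components, so $\dim I \geq 1$ on every component. For the upper bound I would argue that $I$ cannot be $2$-dimensional: a $2$-dimensional component would force $L$ and $M$ to share a connected component (both being $2$-tori), but distinct general members of $\gothB$ are disjoint by Lemma~\ref{deflem}\refenum{ii}, and $L = M$ is excluded. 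Hence every component of $I$ has complex dimension exactly $1$ and is a compact complex torus of dimension one, i.e.\ an elliptic curve.

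The main obstacle, as I see it, is the careful bookkeeping around \textbf{generality}: Proposition~\ref{torus intersection} is phrased for a \emph{general} Lagrangian subtorus, whereas Lemma~\ref{lem:elliptic} posits two \emph{fixed} tori $L, M$ intersecting smoothly. I expect the cleanest route is to observe that the proposition's proof only uses the smoothness of the intersection together with the Lemma~\ref{deflem}\refenum{ii} disjointness of nearby deformations --- both of which are available under the stated hypotheses --- so the triviality of $N_{I/M}$ and the torus structure of each component of $I$ follow directly. Once that is in place, the dimension count excluding $0$- and $2$-dimensional components is routine and the conclusion that each component is an elliptic curve is immediate.
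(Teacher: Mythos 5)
Your first half --- applying Proposition~\ref{torus intersection} (together with the bookkeeping about generality, and the easy exclusion of two-dimensional components) to see that every positive-dimensional component of $I$ is an elliptic curve --- matches the paper. The genuine gap is in your exclusion of zero-dimensional components. Remark~\ref{epsilon etale}\refenum{ii} says only that the \emph{whole} intersection $L\cap M$ cannot be a nonempty finite set: when the intersection is proper (purely zero-dimensional), every point contributes a positive local multiplicity to $[L].[M]$, contradicting $[L].[M]=0$. It does \emph{not} rule out isolated points coexisting with elliptic-curve components, and that mixed situation is precisely what Lemma~\ref{lem:elliptic} has to eliminate (indeed the paper's own proof begins with ``it remains to exclude the existence of zero-dimensional connected components''). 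In the mixed case the intersection is excess, and the vanishing of $[L].[M]$ by itself gives no control: the contribution of a positive-dimensional component to the intersection number is governed by excess intersection theory and can a priori be negative (compare the self-intersection of a $(-1)$-curve on a surface), so one cannot simply ``subtract off'' the curve components and conclude that there are no isolated points.

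The missing ingredient, which is exactly what the paper's proof supplies, is the formula from Lagrangian intersection theory, \cite[Introduction]{BehrendFantechi}: for a smooth Lagrangian intersection one has $[L].[M]=\chi(I)$, the topological Euler characteristic of the intersection. Since each elliptic-curve component contributes $\chi=0$ and each isolated point contributes $1$, the identity $0=[L].[M]=\chi(I)$ forces the number of zero-dimensional components to be zero. Without this formula (or some other argument pinning down the excess contribution of the curve components), the dimension-theoretic part of your argument does not close.
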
 
\begin{proof}It remains to exclude the existence of zero-dimensional connected components of $I$. By general Lagrangian intersection theory, see for example~\cite[Introduction]{BehrendFantechi}, we have
$$[L].[M] = \chi(I). $$
However, this already implies the claim, since by Proposition~\ref{torus intersection} above, any positive dimensional component of $I$ is a smooth elliptic curve, contributing zero to the Euler characteristic $\chi(I)$.
\end{proof}
\begin{cor}\label{cor:degree}
Let $X$ be a four-dimensional projective hyperk\"ahler manifold, $L$ a Lagrangian subtorus. Assume that $X$ is $L$-separable. Then, the evaluation map $\epsilon$ in Diagram~\eqref{diagram:Barlet} has degree at least three.
\end{cor}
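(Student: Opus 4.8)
\emph{Proof proposal.} The plan is to rule out directly the two possibilities $\deg\epsilon = 1$ and $\deg\epsilon = 2$ by analysing the general fibre of the $L$-reduction. Since $X$ is $L$-separable, the map $\phi_L\colon X \dashrightarrow \gothT$ is birational; as it is moreover almost holomorphic by Lemma~\ref{lem: L-reduction almost holomorphic}, its general fibre is zero-dimensional. Writing $d = \deg\epsilon$, I will exhibit a positive-dimensional fibre whenever $d \le 2$, which yields the desired contradiction.

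First I would fix a general point $x \in X \setminus X_\Delta$. By Remark~\ref{remark nice fibre of phi} there are exactly $d$ smooth tori $L_1, \dots, L_d$ of the family $\gothB$ passing through $x$; since $x \notin X_\Delta$, no singular member of $\gothB$ passes through $x$, so these are \emph{all} the members of $\gothB$ through $x$. Hence Lemma~\ref{lem: F_x} identifies the fibre $F_x$ of $\phi_L$ with the connected component through $x$ of $L_1 \cap \dots \cap L_d$.

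If $d = 1$, then $F_x$ is the connected component of $L_1$ through $x$, that is, all of $L_1$; being a Lagrangian subtorus in a fourfold, this is two-dimensional, contradicting the birationality of $\phi_L$. If $d = 2$, then $F_x$ is the component through $x$ of $I = L_1 \cap L_2$. By generality of $x$ the tori $L_1$ and $L_2$ meet smoothly, so Lemma~\ref{lem:elliptic} applies and shows that $I$ is a disjoint union of elliptic curves. Thus $F_x$ is a one-dimensional elliptic curve, again contradicting birationality. In either case we reach a contradiction, so $\deg\epsilon \geq 3$.

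The main obstacle is the case $d = 2$. A priori the two tori through $x$ could meet in the isolated point $x$, in which case $\phi_L$ would be birational; excluding this is exactly what separates the statement ``degree at least three'' from the easier bound ``degree at least two'' (which already follows from Theorem~\ref{thm:Lsep} together with Lemma~\ref{lem:epsilon birational}). The decisive input is Lemma~\ref{lem:elliptic}: via Lagrangian intersection theory and the vanishing of the intersection product $[L_1].[L_2]$, it forces every connected component of $L_1 \cap L_2$ to be a positive-dimensional elliptic curve, so no isolated intersection point can occur. The only auxiliary point to verify is that the two tori through a general $x$ intersect smoothly, which I would deduce from Lemma~\ref{lem:smooth intersection} by a genericity argument.
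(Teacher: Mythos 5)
Your proposal is correct and follows essentially the same route as the paper: the decisive case $d=2$ is excluded exactly as the paper does it, by combining Lemma~\ref{lem: F_x} (identification of the fibre of $\phi_L$ with the connected component of the intersection of the tori through $x$), Lemma~\ref{lem:smooth intersection} (smoothness of the intersection for general $x$), and Lemma~\ref{lem:elliptic} (the intersection is a union of elliptic curves, hence positive-dimensional). The only cosmetic difference is in the case $d=1$, which you rule out directly by noting that the fibre of $\phi_L$ would be the two-dimensional torus itself, whereas the paper cites Theorem~\ref{thm:Lsep} together with Lemma~\ref{lem:epsilon birational}; both arguments are valid.
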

\begin{proof}
As $X$ is assumed to be $L$-separable, Theorem \ref{thm:Lsep} and Lemma \ref{lem:epsilon birational} imply that $\epsilon$ is not birational. It remains to exclude the case $\deg \epsilon =2$. By Lemma \ref{lem: F_x}, the $L$-separability means that at a general point $x \in X$ the connected component of $\bigcap_{[M]\in \gothB,\, x \in M} M$ is just $x$. If $\deg\epsilon = 2$ there are just two tori in $\gothB$ containing $x$, say $M_1$ and $M_2$. As $x$ was general, Lemma \ref{lem:smooth intersection} tells us that $M_1 \cap M_2$ is smooth. Then Lemma \ref{lem:elliptic} contradicts the fact that the connected component of $M_1 \cap M_2$ containing $x$ is $\{x\}$.
\end{proof}

\section{Hyperk\"ahler fourfolds}
Using the results from the last section we can now prove our main result which gives the strongest possible positve answer to Beauville's question:
\begin{theo}\label{thm: dim4 holom}
Let $X$ be a four-dimensional hyperk\"ahler manifold containing a Lagrangian torus $L$. Then $X$ admits a holomorphic Lagrangian fibration with fibre $L$.
\end{theo}
\begin{rem}
We are grateful to E.~Amerik for communicating the following linear algebra observation to us which serves to exclude $L$-separable manifolds $X \supset L$ in dimension four. This greatly simplified a previous deformation-theoretic argument.
\end{rem}
\begin{lem}\label{lem:symplectic la}
 Let $V$ be a four-dimensional symplectic vector space with symplectic form $\sigma$, and let $W_1, W_2, W_3 \subset V$ be three Lagrangian subspaces satisfying $\dim W_i\cap W_j = 1$ for all $i\neq j$. Then $W_1\cap W_2 \cap W_3\neq \{0\}$.
\end{lem}

\begin{proof}
 Suppose on the contrary that $W_1\cap W_2 \cap W_3 = \{0\}$ and consider the span $\langle W_1, W_2\rangle$. It is of dimension $3$ as $\dim W_1 \cap W_2 = 1$. Moreover, we claim that
\begin{equation}  \label{eq:inclusion}                                                                                                                                                      
   W_3 \subset \langle W_1, W_2\rangle.                                                                                                                                                                   \end{equation}
 Indeed, otherwise we would have $\dim W_3 \cap \langle W_1, W_2\rangle = 1$, implying that the intersections $ W_3 \cap \langle W_1, W_2\rangle = W_3 \cap W_1= W_3 \cap W_2$ are all one-dimensional, in contradiction to our assumption that $W_1\cap W_2 \cap W_3= \{0\}$.

Now, again using $W_1\cap W_2 \cap W_3 = \{0\}$ we write 
$ \langle W_1, W_2\rangle = W_3 \oplus (W_1 \cap W_2)$.
 As $V$ is symplectic and $W_3$ is Lagrangian, there is $v\in W_1 \cap W_2$ and $w \in W_3$ such that $\sigma(v,w)\neq 0$. According to the inclusion \eqref{eq:inclusion} we can write $w=w_1+w_2$ with $w_i \in W_i$, so that $$0 \neq \sigma(v,w)= \sigma(v,w_1) + \sigma(v,w_2)= 0 +0=0 ,$$ as $W_1$ and $W_2$ are Lagrangian. Contradiction.
\end{proof}

\begin{rem}\label{remark lagrangian grassmanian}
Lemma~\ref{lem:symplectic la} can also be proven using the following beautiful geometric argument which was explained to us by Laurent Manivel:  The Grassmanian of Lagrangian subspaces in $V\isom \IC^4$ is biholomorphic to the (smooth) intersection of the Pl\"ucker quadric $\widetilde Q \subset \mathbb{P}(\bigwedge^2 V)$ and the linear subspace defined by vanishing of the symplectic form $\sigma:~\bigwedge^2 V \to \mathbb{C}$, and hence is a smooth quadric $Q \subset \IP^4$. The condition $\dim W_i\cap W_j = 1$ means that the line in $\IP^4$ joining the points $[W_i], [W_j] \in Q$ is contained in $Q$. If the triple intersection $W_1\cap W_2 \cap W_3 = \{0\}$, then $[W_1], [W_2], [W_3]$ span a plane $P$. But $P \cap Q$ has degree $2$ and thus cannot contain a union of $3$ lines.
\end{rem}

\begin{prop}\label{prop: not L-sep}
Let $X$ be a four-dimensional projective hyperk\"ahler manifold containing a Lagrangian torus $L$. Then $X$ is not $L$-separable.
\end{prop}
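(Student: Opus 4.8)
The plan is to argue by contradiction, reducing the statement to the symplectic linear algebra of Lemma~\ref{lem:symplectic la}. So suppose that $X$ \emph{is} $L$-separable. By Corollary~\ref{cor:degree} the evaluation map $\epsilon$ then has degree $d\geq 3$, so by Remark~\ref{remark nice fibre of phi} a general point $x\in X\setminus X_\Delta$ lies on $d\geq 3$ pairwise distinct smooth Lagrangian tori from the family $\gothB$; fix three of them, $M_1, M_2, M_3$. On the other hand, $L$-separability together with Lemma~\ref{lem: F_x} means precisely that the connected component through $x$ of $\bigcap_{[M]\in\gothB,\,x\in M} M$ is the single point $\{x\}$. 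To reach a contradiction I therefore want to exhibit a positive-dimensional curve through $x$ contained in $M_1\cap M_2\cap M_3$, and hence in the above intersection.

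First I would pass to tangent spaces at $x$. Writing $V:=T_xX$ with the symplectic form $\sigma_x$, each $W_i:=T_xM_i$ is a Lagrangian plane in $V$ because $M_i$ is Lagrangian. By generality of $x$, Lemma~\ref{lem:smooth intersection} applies to each pair and shows that $M_i\cap M_j$ is smooth; by Lemma~\ref{lem:elliptic} it is then a disjoint union of elliptic curves, and I denote by $C_{ij}$ the component through $x$. Smoothness of the (scheme-theoretic) intersection identifies its Zariski tangent space at $x$ with $W_i\cap W_j$, so that $\dim(W_i\cap W_j)=\dim T_xC_{ij}=1$ for all $i\neq j$. The hypotheses of Lemma~\ref{lem:symplectic la} are thus met, and I conclude $W_1\cap W_2\cap W_3\neq\{0\}$. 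Since $W_1\cap W_2$ and $W_1\cap W_3$ are two lines in $W_1$ meeting in this nonzero subspace, they coincide; equivalently $T_xC_{12}=T_xC_{13}$ as lines in $T_xM_1$.

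The final and most delicate step is to upgrade this infinitesimal coincidence to an equality of curves. Both $C_{12}$ and $C_{13}$ are smooth elliptic curves lying in the abelian surface $M_1$ and passing through $x$, and by Proposition~\ref{torus intersection} each has trivial normal bundle in $M_1$. An elliptic curve with trivial normal bundle in an abelian surface has self-intersection zero and is a translate of a one-dimensional abelian subvariety, by the universal property of its Albanese. Such a translate is determined by the point it passes through together with its tangent line at that point; since $C_{12}$ and $C_{13}$ share the point $x$ and, by the previous step, the same tangent line there, they must coincide, say $C:=C_{12}=C_{13}$. But then $C\subseteq M_1\cap M_2$ and $C\subseteq M_1\cap M_3$, whence $C\subseteq M_1\cap M_2\cap M_3$ is a positive-dimensional curve through $x$ contained in $\bigcap_{[M]\in\gothB,\,x\in M} M$, contradicting $L$-separability. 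I expect the main obstacle to be exactly this last passage from a common tangent direction to a genuinely shared curve: it is here that the torus structure of the $M_i$ --- via the triviality of normal bundles from Proposition~\ref{torus intersection} and the rigidity of subtorus-translates --- becomes indispensable, whereas the symplectic input of Lemma~\ref{lem:symplectic la} only supplies infinitesimal information.
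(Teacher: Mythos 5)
Your argument is on the right track and uses the same key input as the paper (Lemma~\ref{lem:symplectic la}), but its final step has a genuine gap. By Corollary~\ref{cor:degree} there are $d \geq 3$ tori through the general point $x$, and you work with only three of them. Having shown $C \subseteq M_1 \cap M_2 \cap M_3$, you conclude that $C$ lies in $\bigcap_{[M]\in\gothB,\,x\in M} M$ and hence contradicts $L$-separability via Lemma~\ref{lem: F_x}. This inference is only valid if $d=3$: by Remark~\ref{remark nice fibre of phi} the intersection controlled by $L$-separability runs over \emph{all} $d$ tori through $x$, and when $d>3$ nothing you have proved prevents the remaining tori $M_4,\dots,M_d$ from cutting $C$ down to the single point $x$ --- which is exactly what $L$-separability would predict. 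Fortunately the gap is easy to close: run your triple argument on $(M_1,M_2,M_k)$ for each $k\in\{3,\dots,d\}$ (the required smoothness of $M_1\cap M_k$ and $M_2\cap M_k$ again follows from generality of $x$). This yields $C_{12}=C_{1k}\subseteq M_k$ for every $k$, so $C:=C_{12}$ really does lie in the full intersection, and the contradiction stands.

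With that repair your proof is correct, and it is structured differently from the paper's. The paper uses $L$-separability at the outset to choose the third torus cleverly: since not every torus through $x$ can contain the elliptic curve $E=C_{12}$, there is a torus $L_3$ through $x$ not containing $E$, so that $L_1\cap L_2\cap L_3$ is zero-dimensional at $x$; by generality this triple intersection is moreover smooth at $x$, which forces $W_1\cap W_2\cap W_3=\{0\}$ and contradicts Lemma~\ref{lem:symplectic la} immediately. That argument terminates at the level of tangent spaces. Your version instead applies the lemma to an arbitrary triple and must then upgrade the coincidence of tangent lines to a coincidence of curves; for this you invoke the rigidity of elliptic curves in abelian surfaces (a smooth genus-one curve is a translate of a subtorus, hence determined by a point and a tangent direction). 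That step is correct --- and Proposition~\ref{torus intersection} supports it --- but it is extra geometric machinery that the paper's choice of $L_3$ renders unnecessary; the trade-off is that your route makes explicit the global picture that all tori through $x$ share a common elliptic curve, which the paper's infinitesimal argument never exhibits.
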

\begin{proof}
Suppose on the contrary that $X$ is $L$-separable. Given a general point $x \in X$, it follows from Lemma~\ref{lem: F_x} and Corollary~\ref{cor:degree} that there exists a natural number $d \geq 3$, and $d$ smooth Lagrangian subtori that locally cut out $x$.
The point $x$ being general,  Lemma \ref{lem:smooth intersection} and Lemma~\ref{lem:elliptic} imply that there exist two such tori, say $L_1$ and $L_2$, that intersect in an elliptic curve $E$ at $x$. Any other torus in $\gothB$ passing through $x$ either contains $E$ or cuts out a zero-dimensional subscheme. 

As a consequence of $L$-separability there exists a lagrangian torus $L_3$ containing the point $x$ but not containing $E$, such that the intersection scheme $L_1\cap L_2\cap L_3$ is zero-dimensional at the point $x$.
Again invoking that $x$ was general, we may assume that the intersections $L_1 \cap L_3 $, $L_2 \cap L_3$ and $L_1 \cap L_2 \cap L_3$ are smooth at $x$.
Consequently, the three Lagrangian subspaces $W_i:= T_{L_i, x} \subset T_{X,x}$ satisfy the assumptions of Lemma~\ref{lem:symplectic la}. It follows that $W_1 \cap W_2 \cap W_3 \neq \{0\}$, contradicting our choice of $L_1, L_2, L_3$. Therefore, $X$ cannot be $L$-separable.
 \end{proof}
 \begin{proof}[Proof of Theorem \ref{thm: dim4 holom}]
  If $X$ is not projective, we are done by \cite[Thm.~4.1]{glr11a}, so we may assume $X$ to be projective. By Proposition~\ref{prop: not L-sep}, $X$ is not $L$-separable and hence admits an almost holomorphic Lagrangian fibration $f\colon X\dashrightarrow B$ by Theorem~\ref{thm:Lsep}. It remains to show that the existence of an almost holomorphic Lagrangian fibration implies the existence of a holomorphic one, which will be done in Lemma~\ref{lem: already holomorphic} below.
 \end{proof}

\begin{lem}\label{lem: already holomorphic}
 Let $f\colon X\dashrightarrow B$ be an almost holomorphic Lagrangian fibration on a projective hyperk\"ahler fourfold. Then there exists a birational modification $\psi\colon  B\dashrightarrow B'$ such that $\psi\circ f\colon X\to B'$ is a holomorphic Lagrangian fibration.
\end{lem}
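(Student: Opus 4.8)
The plan is to reduce to the holomorphic model produced in \cite{glr11a}, and then to exploit that, $f$ being almost holomorphic, the birational modification relating the two models is concentrated over the bad locus of $f$ and is therefore vertical for the fibration. I would first set $U\subseteq B$ to be the maximal Zariski-open subset over which $f$ restricts to a proper holomorphic map, and invoke the construction recalled in the introduction: it attaches to the almost holomorphic Lagrangian fibration $f$ a projective hyperk\"ahler fourfold $Y$, birational to $X$, carrying a \emph{holomorphic} Lagrangian fibration $g\colon Y\to B'$ onto a normal surface $B'$. Since $f$ is already a proper morphism over $U$, this construction should leave $\inverse f(U)$ untouched, so that the induced birational maps $\beta\colon X\dashrightarrow Y$ and $\psi\colon B\dashrightarrow B'$ are isomorphisms over $U$; in particular $B'\setminus\psi(U)$ is a proper closed subset of the surface $B'$, hence at most one-dimensional.

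Next I would use the birational geometry of hyperk\"ahler fourfolds. As a birational map of hyperk\"ahler manifolds, $\beta$ is an isomorphism in codimension one, and by the explicit description of the birational geometry of projective hyperk\"ahler fourfolds (Wierzba--Wi\'sniewski) it factors as a finite chain of Mukai flops
\[
X=X_0\dashrightarrow X_1\dashrightarrow\dots\dashrightarrow X_k=Y,
\]
each associated with a small contraction $c_i\colon X_i\to\bar X_i$ whose exceptional fibres are disjoint unions of Lagrangian planes $\IP^2$. Because $\beta$ is an isomorphism over $U$, all of these flopping planes lie over $B'\setminus\psi(U)$.

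The heart of the argument will be that each flopping plane is \emph{vertical}, i.e. collapsed by the fibration. I would transport $g$ backwards along the chain, so that it suffices to descend one step $X_i\dashrightarrow X_{i+1}$: assuming a holomorphic Lagrangian fibration $g_{i+1}\colon X_{i+1}\to B'$, each plane $P\isom\IP^2$ contracted by $X_{i+1}\to\bar X_i$ is mapped by $g_{i+1}$ into $B'\setminus\psi(U)$, a set of dimension at most one. As every morphism from $\IP^2$ to a curve is constant, $g_{i+1}$ must be constant on $P$, and hence on the connected fibres of $X_{i+1}\to\bar X_i$. Thus $g_{i+1}$ factors through a morphism $\bar g_i\colon\bar X_i\to B'$, and precomposing with the flopping morphism $X_i\to\bar X_i$ produces a holomorphic Lagrangian fibration $g_i\colon X_i\to B'$. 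Descending the whole chain from $g_k=g$ down to $g_0$ yields a morphism $h\colon X\to B'$, which, being equal to $g\circ\beta$, agrees as a rational map with $\psi\circ f$.

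Finally I would check that $h=\psi\circ f$ is of the desired type. Via the codimension-one isomorphism $\beta$ its general fibre coincides with a general fibre of $g$, hence is a connected two-dimensional torus and a deformation of $L$; so $0<\dim B'<\dim X$ and, after a harmless Stein factorisation to ensure connected fibres, Theorem~\ref{thm: matsushita} identifies $\psi\circ f\colon X\to B'$ as a holomorphic Lagrangian fibration. The main obstacle I anticipate is exactly the verticality of the flopping loci: it is the almost-holomorphy of $f$---which confines $\beta$ to the at most one-dimensional locus $B'\setminus\psi(U)$---together with the rigidity of morphisms out of $\IP^2$ that forces each flopping plane to a point of $B'$ and thereby lets the fibration descend; establishing that $\beta$ really is an isomorphism over $U$, and that no horizontal Lagrangian plane (such as a section) intervenes, is the step that must be secured with care.
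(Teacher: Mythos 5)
Your proposal is correct and takes essentially the same route as the paper's proof: pass to the holomorphic model of \cite[6.2]{glr11a}, factor the connecting birational map into Mukai flops via \cite{wierzba-wisniewski03}, use that $\mathbb{P}^2$ admits no non-constant map to a curve to force each flopping plane into a single fibre, and descend the fibration flop by flop. The one caveat is that \cite[6.2]{glr11a} only guarantees an isomorphism near the \emph{general} fibre of $f$, not over all of $U$; this weaker statement is what the paper uses and is all your argument actually needs, since it already forces the image of each flopping plane to be a proper closed subset of the surface $B'$, hence a point.
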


The proof of Lemma~\ref{lem: already holomorphic} rests on the explicit knowledge of the birational geometry of hyperk\"ahler fourfolds. For this we recall the notion of \emph{Mukai flop}: Assume that a hyperk\"ahler fourfold $X$ contains a smooth subvariety $P\isom \IP^2$. If we blow up $P$, the exceptional divisor is isomorphic to the projective bundle  $\IP(\Omega_{\IP^2}^1)$, and it is well known that it can be blown down in the other direction to yield another hyperk\"ahler manifold $X'$. The resulting birational transformation $X\dashrightarrow X'$ is called the \emph{Mukai flop} at $P$.

\begin{proof}[Proof of Lemma~\ref{lem: already holomorphic}]
 By \cite[6.2]{glr11a} there exists a holomorphic model for $f$, that is, a Lagrangian fibration $f'\colon X'\to B'$ on a possibly different hyperk\"ahler manifold $X'$ and a diagram
\[ \xymatrix{ X\ar@{-->}[d]_f\ar@{-->}[r]^\phi & X'\ar[d]^{f'}\\ B\ar@{-->}[r]^\psi& B'}\]
with birational horizontal arrows such that $\phi$ is an isomorphism near the general fibre of $f$.

We claim that the composition $f'\circ \phi=\psi\circ f$ is holomorphic and thus a  Lagrangian fibration on $X$. To see this first note that by \cite[Thm.~1.2]{wierzba-wisniewski03} the map $\phi$ factors as a finite composition of Mukai flops, so by induction we may assume that $\inverse \phi$ is the simultaneous Mukai flop of a disjoint union of embedded projective planes $\IP^2\isom P_i\subset X'$.

As $\phi$ is holomorphic near a general fibre of $f'$, none of the $P_i$'s can intersect the general fibre. Thus $f'(P_i)$ is a proper  subset of $B'$ and hence of dimension at most 1. Since there is no non-constant map from $\IP^2$ to a curve, $f'(P_i)$ is a single point. In other words, the locus of indeterminacy of $\inverse \phi$ is contained in the fibres of $f'$, and thus the composition $f'\circ \phi$ remains holomorphic.
\end{proof}

\end{document}